\documentclass[preprint,12pt]{elsarticle}
\usepackage{etoolbox}
\makeatletter
\def\ps@pprintTitle{%
	\let\@oddhead\@empty
	\let\@evenhead\@empty
	\def\@oddfoot{\reset@font\hfil\thepage\hfil}
	\let\@evenfoot\@oddfoot
}
\makeatother
\usepackage{bm}
\usepackage{float}
\usepackage{graphics}
\usepackage{amsmath, amsthm, amssymb}
\usepackage{natbib}
\usepackage{caption}
\usepackage[colorlinks=true]{hyperref}
\usepackage{lscape}
\usepackage{float}
\usepackage[utf8]{inputenc} 
\usepackage{amsthm}

\usepackage{enumerate}
\usepackage{mathrsfs}
\usepackage{placeins}
\usepackage[inner=4cm,outer=2cm]{geometry}
\usepackage{graphicx}
\usepackage{amsfonts}
\usepackage{verbatim}
\usepackage{amssymb}
\usepackage{amsthm,multirow}

\newtheorem{thm}{Theorem}[section]

\newtheorem{cor}{Corollary}[section]

\theoremstyle{definition}

\newtheorem{ex}{Example}[section]

\theoremstyle{remark}
\newtheorem{rem}{Remark}[section]

\theoremstyle{properties}

\theoremstyle{Examples}

\numberwithin{equation}{section}

\usepackage[mathscr]{euscript}
\pagestyle{plain} 
\usepackage{geometry} 
\geometry{a4paper} 
\geometry{margin=1.25in} 
\usepackage{graphicx,lscape} 
\usepackage{booktabs} 
\usepackage{bigstrut}
\usepackage{array} 
\usepackage{paralist} 
\usepackage{verbatim} 
\usepackage{subfig} 
\biboptions{comma,round,authoryear}
\begin{document}
\begin{frontmatter}
	\title{\textbf{Revisiting poverty measures using\\ quantile functions}}
	\author{N. Unnikrishnan Nair, S.M.Sunoj\corref{cor1}}
	\ead{unnikrishnannair4@gmail.com, smsunoj@cusat.ac.in}
	
	\cortext[cor1]{Corresponding author}
	
	\address{Department of Statistics, Cochin University of Science and Technology, Cochin 682 022, Kerala, India}
	
	
	\begin{abstract}
	In this article we redefine various poverty measures in literature in terms of quantile functions instead of distribution functions in the prevailing approach. This enables provision for alternative methodology for poverty measurement and analysis along with some new results that are difficult to obtain in the existing framework. Several flexible quantile function models that can enrich the existing ones are proposed and their utility is demonstrated for real data.	
	\end{abstract}

	\begin{keyword}
		 Poverty measures \sep Sen index \sep quantile function \sep  income models.
	\end{keyword}
	
	
\end{frontmatter}

\section{Introduction}

\par Poverty, interpreted as a state or condition in which an individual or community lacks resources and other essentials for a minimum standard of living, is a widely discussed subject in socio-economic literature. As a matter of concern for alleviation of poverty through various welfare programmes, there is need to identify the poor and to understand the intensity of poverty. Towards this objective, several measures of poverty such as head count ratio, poverty (income) gap ratio, Gini index for the poor, etc., and their combinations in the form of various indices like the Sen index and the FGT index satisfying certain axioms of behaviour have been proposed in the last few decades. An essential feature of all these indices of poverty is that they are based on the income distribution of the population, despite the recent view that poverty is related to social welfare which includes other facts than income inequality. See \cite{yang2017relationship} for a discussion on the relationship between poverty and income inequality.\\
\par The definition of poverty measures based on income distribution, uses the distribution function of incomes in specifying them. Although probability distributions can also be specified by quantile functions, as an alternative equivalent of distribution functions, the prospect of looking at poverty measures using quantile functions is a rarely visited area in econometrics. The present work is an attempt to redefine the various poverty concepts in the framework of quantile functions and to highlight the additional advantages arising therefrom. In the first place it gives a better insight into the properties of poverty indices by obtaining new results that are difficult to achieve by employing the distribution functions. Many quantile functions do not have tractable distribution functions that limits their practical utility in the existing distribution function based framework.  The fact that they are flexible to contain some of the existing income models as special cases and approximates many others satisfactorily, points out the need to accommodate them also as candidate models in practical problems. In addition to these, generally quantile functions possess properties specific to modelling problems, that are not satisfied by distribution functions. We refer to \cite{gilchrist2000statistical} for details.\\
\par Besides providing the quantile-based definitions of the poverty (income) gap ratio, mean function, Gini index of the poor, Sen and Sen-Shorrocks indices, we show that the first four determines the income law and conditions under which they can be chosen. The relationship between Lorenz curve and the poverty measures are also found. Some highly flexible quantile function that represent the generalized lambda, Wakeby, Kappa and Govindarajulu distributions are proposed as prospective income models and their poverty measures are presented. the utility of our results are demonstrated for income data pertaining to the California state.\\
\par The paper is organized into five sections. After the present introductory section, in Sections 2 and 3 we discuss the additive separable measures and rank based measures of poverty. In Section 4 we propose some flexible quantile functions as income models. Finally, the applications of our results in analysing data on incomes of the California state are presented in Section 5.

\section{Additively separable measures}

Let $X$ be a continuous non-negative random variable representing the income in a population with distribution function $F(x)$, probability density function $f(x)$ and quantile function
$$
Q(u)=\inf [x \mid F(x) \geq u], \quad 0 \leq u \leq 1.
$$
When $F(x)$ is continuous and strictly increasing, $F(Q(u))=u$, so that on differentiation
$$
f(Q(u))=[q(u)]^{-1}
$$
where $q(u)=\frac{d Q(u)}{d u}$ is the quantile density function and $F(x)=u$ if and only if $\quad x=Q(u)$.\\
\par We assume that $x=t$ is the poverty line so that all individuals with income below $t$ are considered poor and that there exist a function representing the poverty of the individual. A general approach (\cite{atkinson1987measurement}) to the definition of a poverty measure is to propose an additively separable aggregate poverty metric

\begin{equation}\label{2.1}
A_F(t)=\int_0^t a(x, t) d F(x)
\end{equation}
with $a(x, t)$ interpreted as a deprivation suffered with income $x$ satisfying $a(x, t)=0$ for $x \geq t$. Many poverty measures discussed in literature like those of \cite{watts1969economic}, \cite{thon1979measuring}, \cite{clark1981indices}, etc., belong to this class. When expressed in  terms of quantile functions \eqref{2.1} has the form

\begin{equation}\label{2.2}
A_Q(u)=\int_0^u a(Q(p), Q(u)) d u.
\end{equation}

An important class of measures suggested by \citet{foster1984class} reads

\begin{equation}\label{2.3}
A_F(t, \alpha)=\int_0^t\left(1-\frac{x}{t}\right)^\alpha d F(x)
\end{equation}
or equivalently

\begin{eqnarray}\label{2.4}
A_Q(Q(u), d)=A_Q(u, \alpha)=\int_0^u\left(1-\frac{Q(p)}{Q(u)}\right)^\alpha dp.
\end{eqnarray}
When $\alpha=0,$ \eqref{2.3} reduces to $F(t)$ which is the head count ratio giving the proportion of poor in the population. Another important special case, discussed in many contexts is obtained when $\alpha=1$,

\begin{equation}\label{2.5}
 A_Q(u, 1) = A_1(u) = u - \frac{1}{Q(u)} \int_0^u Q(p) d p 
\end{equation}
called the poverty gap ratio (PGR). It is interpreted as the shortfall of the income of the poor from the poverty line and indicates the poverty line and the average income of the poor. Some new properties of PGR and their implications to poverty analysis are presented below.

\begin{thm}\label{tm1}
	The income distribution is uniquely determined by $A(u)$ through the formula
\begin{equation}\label{2.6}
Q(u)=\frac{\exp \left[-\int_u^1 \left(p - A_1(p)\right)^{-1}d p \right]}{u-A_1(u)}.
\end{equation}
\end{thm}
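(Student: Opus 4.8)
The plan is to turn the defining identity \eqref{2.5} into a first–order ordinary differential equation and integrate it. Write $G(u)=\int_0^u Q(p)\,dp$, so that $G$ is differentiable with $G'(u)=Q(u)$ and $G(1)=\int_0^1 Q(p)\,dp=E(X)$. Relation \eqref{2.5} becomes $A_1(u)=u-G(u)/Q(u)=u-G(u)/G'(u)$, which after rearrangement reads
\begin{equation*}
\frac{G'(u)}{G(u)}=\frac{1}{u-A_1(u)},\qquad\text{equivalently}\qquad \frac{d}{du}\log G(u)=\bigl(u-A_1(u)\bigr)^{-1}.
\end{equation*}
Here I would record that $u-A_1(u)=G(u)/Q(u)>0$ on $(0,1)$ for a strictly positive, continuous, strictly increasing $Q$ (the standing assumptions of the section), so the right–hand side is well defined and the logarithm legitimate, and the differentiations above are justified by the continuity and strict monotonicity of $F$.

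Next I would integrate the last display from $u$ to $1$, giving $\log G(1)-\log G(u)=\int_u^1\bigl(p-A_1(p)\bigr)^{-1}\,dp$, hence
\begin{equation*}
G(u)=G(1)\exp\!\left[-\int_u^1\bigl(p-A_1(p)\bigr)^{-1}\,dp\right],
\end{equation*}
and substituting back $Q(u)=G(u)/(u-A_1(u))$ produces \eqref{2.6}, with $G(1)=E(X)$ as the leading multiplicative constant (which equals $1$, and is therefore invisible, precisely when incomes are scaled to unit mean). Uniqueness then follows immediately: $A_1$ fixes both the integrand and the boundary value in the differential equation, so that equation has a single solution, whence $Q$ — equivalently $F$ — is determined.

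The step I expect to be the genuine obstacle is pinning down the integration constant. Because $A_1$ is invariant under the rescaling $X\mapsto cX$, the differential equation alone recovers $G$, and hence $Q$, only up to a positive multiplicative factor; one closes the argument either by imposing a mean normalization or by supplying $G(1)=E(X)$, and the consistency check at $u=1$ (where $1-A_1(1)=E(X)/Q(1)$) confirms that the reconstruction is coherent. The remaining points are routine: checking that $\int_u^1\bigl(p-A_1(p)\bigr)^{-1}\,dp$ converges for $u\in(0,1)$ — it does for income laws with finite, positive $Q(1)$, since near $p=1$ the integrand tends to $Q(1)/E(X)$ — and that one may legitimately pass between the integral form \eqref{2.5} and its differentiated version.
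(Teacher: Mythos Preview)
Your approach is essentially identical to the paper's: both rewrite \eqref{2.5} as the logarithmic derivative $\frac{d}{du}\log\int_0^u Q(p)\,dp=(u-A_1(u))^{-1}$, integrate from $u$ to $1$, and then recover $Q$. Your added care about the integration constant $G(1)=E(X)$ is well placed --- the paper silently drops this factor, so \eqref{2.6} as written implicitly assumes a unit-mean normalization (or, as you note, determines $Q$ only up to the positive scale that $A_1$ cannot see).
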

\begin{proof}
 From \eqref{2.5},
	$$
	\frac{Q(u)}{\int_0^u Q(p) d p}  =(u-A_1(u))^{-1} $$
 or
	$$\frac{d}{d u} \log \int_0^u Q(p) d p  =(u-A_1(u))^{-1} $$
giving
	$$\int_0^u Q(p) d p  =\exp \left[-\int_u^1(p-A_1(p))^{-1} d p\right].
	$$
Differentiating the last equation we have \eqref{2.6}.
\end{proof}	 
	
Some implications of Theorem \ref{tm1} are the following
\begin{enumerate}
	\item Theorem \ref{tm1} establishes a one-to-one correspondence between the PGR and the income distribution, by saying that the functional form of $A_1(u)$ determines $Q(u)$. Thus if the functional form of $A_1(u)$ is known or obtained by considering the empirical form of $A_1(u)$, then the income distribution can be found directly from \eqref{2.6}(see Section 5). The expression of $A_1(u)$ for several quantile functions are exhibited in Table 1 for easy reference to the corresponding distribution.
	\item  A large number of distributions prescribed in literature as models of income, were not based on a clear rationale, but simply justified by the fact they provided satisfactory goodness of fit. In modelling, it is a widely accepted fact that some distinct features of data must form the basis of selecting the model. Characterization theorems have a great role in finalising the appropriate model. Taking these into consideration by fixing appropriate PGR for the data we have a criterion to choose the model as well as to give sufficient representation for empirical features observed in the data. As in the case of parametric Lorenz curves one can fix suitable parametric PGR for the data. Since any function of $t$ cannot be a PGR, we fix the conditions for an arbitrary function $A_1: R \rightarrow[0,1]$ to be a PGR.
\end{enumerate}
\begin{thm}\label{tm2.2}
A function $A_1: R \rightarrow[0,1]$ will be the poverty gap ratio relating to a distribution $F(x)$, if and only if it satisfies the following properties
\begin{enumerate}[(a)]
	\item $\lim _{t \rightarrow \infty} \frac{d}{d t} t A_1(t)=1 ; \lim _{t \rightarrow 0} \frac{d}{d t} t A_1(t) \geq 0$
	\item $A_1(t)$ is increasing and differentiable
	\item $t A_1(t)$ is convex
	\item $ A_1(t)=\int_0^t \frac{t-x}{t} d F(x), \quad A_1(t) = A_F(t, 1).$	
\end{enumerate} 
\end{thm}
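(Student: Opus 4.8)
The plan is to read the statement as asserting the equivalence of ``$A_1$ is a poverty gap ratio'' with the conjunction of (a)--(c), since property (d) merely records the defining integral representation \eqref{2.5}; accordingly I would prove that the representation (d) forces (a)--(c), and conversely that (a)--(c) force the existence of a distribution $F$ for which (d) holds.

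For the necessity direction, the key device is to set $m(t):=tA_1(t)=\int_0^t (t-x)\,dF(x)=tF(t)-\int_0^t x\,dF(x)$ and differentiate: the two $tf(t)$-type contributions cancel and one obtains the clean identity
\[
\frac{d}{dt}\big(tA_1(t)\big)=F(t).
\]
Everything then falls out quickly. Property (a) is just $\lim_{t\to\infty}F(t)=1$ together with $F(0^{+})\ge 0$; property (c) holds because $m'=F$ is non-decreasing, which is exactly convexity of $m=tA_1$; and for (b) one integrates the identity, obtaining $A_1(t)=\tfrac1t\int_0^t F(x)\,dx$, so $A_1$ is differentiable wherever $F$ is continuous (guaranteed by the standing assumptions) and $A_1'(t)=\tfrac1t\big(F(t)-A_1(t)\big)\ge 0$ since the average of the non-decreasing function $F$ over $[0,t]$ cannot exceed its right-endpoint value $F(t)$. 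The range condition $A_1(t)\in[0,1]$ is also immediate from $A_1=\tfrac1t\int_0^t F$ with $0\le F\le 1$.

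For the sufficiency direction, assume (a)--(c) and \emph{define} $F(t):=\frac{d}{dt}\big(tA_1(t)\big)$, which exists (a.e., and everywhere on the right-continuous version) because $tA_1(t)$ is convex by (c). Convexity makes $F$ non-decreasing, condition (a) supplies $F(0^{+})\ge 0$ and $\lim_{t\to\infty}F(t)=1$, and a non-decreasing function with those two limits automatically satisfies $0\le F\le 1$; hence $F$ is a genuine distribution function supported on $[0,\infty)$. It remains only to check that this $F$ reproduces $A_1$: by integration by parts $\int_0^t (t-x)\,dF(x)=\int_0^t F(x)\,dx$ (the boundary term vanishes because $(t-x)|_{x=t}=0$ and $F(0)=0$ for a non-negative variable), and since $\int_0^t F(x)\,dx=tA_1(t)-0$ by construction, dividing by $t$ gives precisely \eqref{2.5}, i.e.\ property (d). The differentiability clause in (b) is what lets one upgrade $F$ from merely monotone to continuous, matching the paper's standing hypothesis; note that the ``increasing'' clause of (b) is actually redundant here, since $A_1=\tfrac1t\int_0^t F$ is automatically non-decreasing.

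I expect the only real friction to be the bookkeeping in the sufficiency half: justifying that the a.e.\ derivative of the convex function $tA_1(t)$ may be taken as an honest, right-continuous, $[0,1]$-valued distribution function with $F(0)=0$, and being careful with the boundary term $tF(0)$ in the integration by parts (which is why one wants the left-endpoint normalization and non-negativity in place before integrating). The algebraic core — the cancellation yielding $(tA_1)'=F$ and the averaging inequality for monotone functions — is routine.
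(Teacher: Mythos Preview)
Your approach is essentially the paper's own: the core device in both is the integration-by-parts identity $tA_1(t)=\int_0^t(t-x)\,dF(x)=\int_0^t F(x)\,dx$, which yields $F(t)=\frac{d}{dt}\big(tA_1(t)\big)$, after which the properties (a)--(c) are read off from the fact that $F$ is a distribution function (necessity) and, conversely, $F$ so defined is checked to be a distribution function using (a)--(c) (sufficiency). Your write-up is considerably more careful than the paper's---you spell out why $A_1$ is increasing via the averaging inequality, and you flag the convexity/differentiability and boundary-term bookkeeping in the sufficiency direction---whereas the paper dispatches both directions in two sentences; but the argument is the same.
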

 \begin{proof}
 	Let $A_1(t)$ be as in (d). Then $$
 	t A_1(t)  =\int_0^t(t-x) d F(x)=\int_0^t F(x) d x $$
 	giving
 	$$
 	F(t)  =\frac{d}{d t} t A_1(t)	$$
 	
 	It is not difficult to see that $F(x)$ satisfies all the conditions of a distribution function of which $A_1(t)$ is the PGR by definition. Conversely if $F(x)$ is the distribution function with PGR; $A_1(t)$, then properties (i) to (iv) hold as seen from the above.
 \end{proof} 
\begin{rem}
The corresponding properties of $A_1(u)$ can be obtained as in Theorem \ref{tm2.2}. We require $A_1 (u)$ to satisfy the conditions that
\begin{enumerate}[i)]
	\item $Q(u)=\frac{\exp \left[-\int_u^1\left(p-A_1(p)\right)^{-1} d p\right]}{u-A_1(u)}$
	\item $A_1(u)$ is an increasing function for $0 < u < 1$ and
	\item  $A_1(u)$ is continuous.	
\end{enumerate} 
\end{rem}
\begin{ex}
Let $A_1(u)=\frac{u^2}{\beta}, \quad 0 \leq u \leq 1, \quad \beta>1.
$  Then from \eqref{2.6} by direct calculation,
$$
Q(u)=\frac{\beta(\beta-1)}{(\beta-u)^2}.
$$
Inverting, $X$ has distribution function
$$
F(x)=\beta-\left(\frac{\beta(\beta-1)}{x}\right)^{\frac{1}{2}}, \quad \frac{\beta-1}{\beta} \leq x \leq \frac{\beta}{\beta-1}.
$$

The Lorenz curve of this distribution is discussed in \cite{rohde2009alternative}. One can read from Table 1, the distributions specified by quantile functions that are generated by special forms of $A_1(u)$.	
\end{ex}

Sometimes, the distribution is represented by its quantile density function $q(u)$ and $Q(u)$ may not have a simple enough expression to use \eqref{2.4}. We observe that
$$
A_1(t)=\frac{1}{t} \int_0^t F(x) d x,
$$
so that when $t=Q(u)$,
\begin{equation}\label{2.7}
A_1(u)=\frac{1}{Q(u)} \int_0^u p q(p) d p.
\end{equation}

A well known case is the distribution specified by
$$
q(u)=k u^{\alpha}(1-u)^{\beta}, k>0, 
$$
$\alpha, \beta$ real which contains the exponential, Pareto II, re-scaled beta, log logistic, Govindarajulu distributions as special cases and approximates many continuous distributions. In this case
$$
A_1(u)=\frac{B_u(\alpha+2, \beta +1)}{B_u(\alpha+1, \beta+1)}, \alpha, \beta>-2.
$$
where $B_u(\alpha, \beta)=\int_0^u p^{\alpha-1}(1-p)^{\beta-1} d p, \alpha, \beta>0$, is the incomplete beta function. An alternative inversion formula for $Q(u)$ is
\begin{equation}\label{2.8}
Q(u)=\exp \left[-\int_u^1 \frac{A_1^{\prime}(p)}{p-A_1(p)} d p\right].
\end{equation}
\begin{ex}
	Assume $x$ to follow power distribution $F(x)=\left(\frac{x}{\alpha}\right)^{\beta}, 0 \leq x \leq 1$ or $Q(u)=\alpha u^{1/\beta}$. Then for this distribution
	$$
	A_1(u)=\frac{u}{\beta+1},
	$$
	 proportional to the head count ratio.
	
\end{ex}
A third special case of \eqref{2.3} is

\begin{equation}\label{2.9}
A_2(t)=A_F(t, 2)=\int_0^t\left(1-\frac{x}{t}\right)^2 d F(x)
\end{equation}
which measures depth of poverty while $A_F(t, 1)$ indicates its
severity. Corresponding to $A_F(t, 2)$, we have

\begin{equation}\label{2.10}
A_2(u)=A_Q(u, 2)=\int_0^u\left(1-\frac{Q(p)}{Q(u)}\right)^2 d p.
\end{equation}
\begin{thm}\label{tm2.3}
The function $A_2(t)$ can be determined from $A_1(t)$ as

\begin{equation}\label{2.11}
A_2(t)=\frac{2}{t^2} \int_0^x x A_1(x) d x.
\end{equation}
	
\end{thm}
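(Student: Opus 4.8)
The plan is to reduce both $A_1$ and $A_2$ to integrals of the distribution function $F$ and then match them. The only earlier fact I need is the identity established inside the proof of Theorem~\ref{tm2.2}, namely $tA_1(t)=\int_0^t F(x)\,dx$, equivalently $A_1(t)=\frac1t\int_0^t F(x)\,dx$.

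First I would rewrite $A_2(t)$ as an integral against $F$ rather than $dF$. Starting from \eqref{2.9}, integrate by parts with $u=(1-x/t)^2$ and $dv=dF(x)$: since $\frac{d}{dx}(1-x/t)^2=-\frac2t(1-x/t)$, and the boundary terms vanish because the factor $(1-x/t)^2$ is zero at $x=t$ while $F(0)=0$ (as $X$ is non-negative and continuous), one obtains
\[
A_2(t)=\frac{2}{t}\int_0^t\Bigl(1-\frac{x}{t}\Bigr)F(x)\,dx=\frac{2}{t^2}\int_0^t (t-x)F(x)\,dx.
\]

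Next I would expand the target expression. Using $xA_1(x)=\int_0^x F(y)\,dy$ and interchanging the order of integration (everything is non-negative, so this is legitimate),
\[
\int_0^t xA_1(x)\,dx=\int_0^t\!\!\int_0^x F(y)\,dy\,dx=\int_0^t F(y)\Bigl(\int_y^t dx\Bigr)dy=\int_0^t (t-y)F(y)\,dy.
\]
Comparing the two displays yields $A_2(t)=\frac{2}{t^2}\int_0^t xA_1(x)\,dx$, which is \eqref{2.11} (the upper limit in \eqref{2.11} should read $t$ rather than $x$).

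There is no genuinely hard step here; the only points requiring care are the vanishing of the boundary terms in the integration by parts (which uses $F(0)=0$ and continuity of $F$ at $t$) and the swap in the order of integration, both immediate. As an alternative route one could avoid the integration by parts altogether by checking directly from \eqref{2.9} that $\frac{d}{dt}\bigl[\tfrac{t^2}{2}A_2(t)\bigr]=\int_0^t(t-x)\,dF(x)=tA_1(t)$, and observing that both sides of \eqref{2.11}, after multiplication by $t^2/2$, vanish at $t=0$; uniqueness of the antiderivative then gives the claim.
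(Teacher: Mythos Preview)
Your proof is correct and follows essentially the same route as the paper: both integrate by parts once to rewrite $t^{2}A_2(t)$ as $2\int_0^t (t-x)F(x)\,dx$, then identify this with $2\int_0^t xA_1(x)\,dx$ via the relation $xA_1(x)=\int_0^x F(y)\,dy$. The only cosmetic difference is that the paper substitutes $F(x)=\frac{d}{dx}\bigl(xA_1(x)\bigr)$ and integrates by parts a second time, whereas you use Fubini on the double integral---these are the same step read in opposite directions. Your observation that the upper limit in \eqref{2.11} should be $t$ rather than $x$ is also correct.
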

\begin{proof}
Since $ \frac{1}{t} \int_0^t F(x) d x=A_1(t)$, we have $F(t)=\frac{d}{d t} t A_1(t)$.
Hence
$$
\begin{aligned}
t^2 A_2(t) & =\int_0^t(t-x)^2 d F(x) \\
& =2 \int_0^t(t-x) d F(x) \\
& =2 \int_0^t(t-x)\left(\frac{d}{d x} x A_1(x)\right) d x \\
& =2 \int_0^t x A_1(x) d x,
\end{aligned}
$$
from which \eqref{2.11} follows.	
\end{proof}
\begin{ex}
For the power distribution $Q(u)=\alpha u^{1 / \beta}$,
$$
A_2(u)=\int_0^u\left(1-\frac{p^{\frac{1}{ \beta}}}{u^ {\frac{1}{ \beta}}} \right)^2 d p=\frac{2 u}{(\beta+1)(\beta+2)}.
$$
Also observe that
$$
A_2(u)=\frac{2}{\beta+2}A_1(u).
$$
	
\end{ex}
\begin{rem}
We see that $A_2(u)$ and $A_1(u)$ are proportional. From \eqref{2.11} it is not difficult to show that $A_2(u)=C A_1(u)$, when $C$ is a constant only $X$ has power distribution.
\end{rem}

 A somewhat similar poverty index was proposed by \cite{clark1981indices} as

\begin{equation}\label{2.12}
C_\beta(t, \beta)=\frac{1}{\beta} \int_0^t\left[1-\left(\frac{x}{t}\right)^{\beta}\right] d F(x), \beta<1 .
\end{equation}
For $\beta=0$, it reduces to the Watt's measure to be discussed below and for $\beta=1$, it is the poverty gap ratio. The index \eqref{2.12} is the \cite{chakravarty1983ethically} metric when ${\beta>0}$. We have the quantile equivalent of \eqref{2.12} as $C_{\beta}(u, \beta) = \frac{1}{\beta} \int_0^u\left[1-\left(\frac{Q(p)}{Q(u)}\right)^{\beta}\right] d p$.  When $\beta=2$,
$$
C_2(u)=C_2(u, 2)=\frac{1}{2} \int_0^u\left(1-\left(\frac{Q(p)}{Q(u)}\right)^2\right) d u.
$$
\begin{thm}\label{tm2.4}
The function $C_2(u)$ is determined form $C_1(u)=A_1(u)$
\end{thm}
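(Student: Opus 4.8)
The plan is to show that $C_2$ is an explicit functional of $A_1$, and then combine this with Theorem \ref{tm2.3} (which already expresses $A_2$ through $A_1$) and with the fact that $C_1=A_1$. Concretely, I would first establish, at the level of the income argument $t$, the pointwise relation
\[
C_2(t)=A_1(t)-\tfrac12 A_2(t),
\]
so that $C_2(u)=A_1(u)-\tfrac12 A_2(u)$ after putting $t=Q(u)$. Since $A_1=A_F(\cdot,1)$ determines the whole income law (Theorem \ref{tm1}) and $A_2$ is recovered from $A_1$ by \eqref{2.11}, this exhibits $C_2$ as a functional of $C_1$ alone.

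The computation rests on the elementary identity
\[
1-\left(\frac{x}{t}\right)^{2}=2\left(1-\frac{x}{t}\right)-\left(1-\frac{x}{t}\right)^{2},
\]
valid for all $x,t$. Multiplying by $dF(x)$ and integrating over $[0,t]$, the left-hand side gives $2C_2(t)$ by \eqref{2.12} with $\beta=2$, while the right-hand side splits as $2\int_0^t(1-x/t)\,dF(x)-\int_0^t(1-x/t)^2\,dF(x)=2A_F(t,1)-A_F(t,2)=2A_1(t)-A_2(t)$, using \eqref{2.3}, \eqref{2.5} and \eqref{2.9}. This yields $C_2(t)=A_1(t)-\tfrac12 A_2(t)$. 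Substituting $t=Q(u)$, recalling \eqref{2.11} and performing the change of variable $x=Q(p)$, I would then record the quantile-form statement
\[
C_2(u)=A_1(u)-\frac{1}{Q^{2}(u)}\int_0^u Q(p)\,A_1(p)\,q(p)\,dp
=C_1(u)-\frac{1}{Q^{2}(u)}\int_0^u Q(p)\,C_1(p)\,q(p)\,dp,
\]
where $Q$ itself is the one reconstructed from $C_1=A_1$ via \eqref{2.6}; hence the right-hand side depends on $C_1$ only.

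I do not anticipate a serious obstacle here; the only point needing care is bookkeeping between the income-argument forms $A_1(t),A_2(t),C_2(t)$ and their quantile counterparts, together with checking that the final expression genuinely involves $C_1$ alone — which is guaranteed because both $Q$ (Theorem \ref{tm1}) and $A_2$ (Theorem \ref{tm2.3}) are determined by $A_1$. An alternative, slightly longer route would mimic the proof of Theorem \ref{tm2.3} directly: write $2C_2(t)=F(t)-t^{-2}\int_0^t x^2\,dF(x)$, integrate the second term by parts twice using $F(x)=\frac{d}{dx}\bigl(xA_1(x)\bigr)$, and arrive at $C_2(t)=A_1(t)-t^{-2}\int_0^t xA_1(x)\,dx$, which is the same relation since $A_2(t)=2t^{-2}\int_0^t xA_1(x)\,dx$ by \eqref{2.11}.
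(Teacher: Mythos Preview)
Your proposal is correct and follows essentially the same approach as the paper: both arguments derive the key relation $C_2(u)=A_1(u)-\tfrac12 A_2(u)$ and then invoke Theorem~\ref{tm2.3} to conclude that $C_2$ is determined by $C_1=A_1$. The only cosmetic difference is that you obtain this relation via the polynomial identity $1-y^2=2(1-y)-(1-y)^2$ at the income level $t$, whereas the paper expands $A_2(u)$ and $C_2(u)$ separately in quantile form and combines them.
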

\begin{proof}
That $C_1(u)=A_1(u)$ is mentioned above.
Now,
$$
\begin{aligned}
A_2(u) & =\int_0^u\left(1-\frac{Q(p)}{Q(u)}\right)^2 d p \\
& =u-\frac{2}{Q(u)} \int_0^u Q(p) d p+\frac{1}{Q^2(u)} \int_0^u Q^2(p) d p
\end{aligned}
$$
giving
$$
\begin{aligned}
& \frac{1}{Q^2(u)} \int_0^u Q^2(p) d p=A_2(u)+\frac{2}{Q(u)} \int_0^u Q(p) d p-u \\
& =A_2(u)+2\left(u-A_1(u)\right)-u \\
&
\end{aligned}
$$
Hence
$$
\begin{aligned}
C_2(u) & =\frac{1}{2 Q^2(u)}\left[\int_0^{\infty}\left[Q^2(u)-Q^2(p)\right] d p\right. 
\end{aligned}
$$
\begin{equation}\label{2.13}
=\frac{u}{2}- \frac{1}{2 Q^2(u)} \int_0^u Q^2(p) d p
\end{equation}
From \eqref{2.12} and \eqref{2.13}
$$
C_2(u)=A_1(u)-\frac{1}{2} A_2(u)
$$
Using Theorem \ref{tm2.3}, $A_2(u)$ is determined from $A_1(u)$ and $A_1(u)=C_1(u)$ and Theorem \ref{tm2.4} is proved.	
\end{proof}

When $\beta$ tends to zero we have the Watt's measure \citep{watts1969economic}
$$
C_0(t, 0)=W_F(t)=\int_0^t(\log z-\log x) d F(x)
$$
or equivalently
$$
 \begin{aligned}
W_Q(u) & =\int_0^u[\log Q(u)-\log Q(p)] d p 
\end{aligned}
$$
\begin{equation}\label{2.14}
 =u \log Q(u)-\int_0^u \log Q(p) d p.
\end{equation}
Like other measures $W_Q(u)$ also determines the income distribution by the inversion formula
\begin{equation}\label{2.15}
Q(u)=\exp \left[-\int_u^1\frac{W'(p)}{p} dp\right].
\end{equation}
An example of income model with interesting properties is the power distribution.
\begin{thm}\label{tm2.5}
The Watt's measure $W_Q(u)=\frac{u}{\beta}$, if and only if $X$ has power distribution $Q(u)=\alpha u^{\frac{1}{\beta}}$.	
\end{thm}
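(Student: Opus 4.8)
The plan is to settle both implications by direct computation with the closed form \eqref{2.14}, $W_Q(u) = u\log Q(u) - \int_0^u \log Q(p)\,dp$, reading off the ``only if'' part from the differential relation that also underlies the inversion formula \eqref{2.15}. For the ``if'' direction I would substitute $Q(u)=\alpha u^{1/\beta}$, so that $\log Q(u) = \log\alpha + \tfrac1\beta \log u$. The first term of \eqref{2.14} is then $u\log\alpha + \tfrac u\beta\log u$, while the integral splits as $\int_0^u \log Q(p)\,dp = u\log\alpha + \tfrac1\beta\int_0^u \log p\,dp = u\log\alpha + \tfrac1\beta(u\log u - u)$, using the elementary primitive $\int_0^u\log p\,dp = u\log u - u$. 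Subtracting, the $u\log\alpha$ and $\tfrac u\beta\log u$ contributions cancel and only $\tfrac u\beta$ survives, i.e. $W_Q(u)=u/\beta$.

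For the converse I would differentiate \eqref{2.14}. Since $\frac{d}{du}\!\left(u\log Q(u)-\int_0^u\log Q(p)\,dp\right) = \log Q(u) + u(\log Q(u))' - \log Q(u) = u\,(\log Q(u))'$, the hypothesis $W_Q(u)=u/\beta$ forces $u\,(\log Q(u))' = 1/\beta$, hence $(\log Q(u))' = 1/(\beta u)$. Integrating gives $\log Q(u) = \tfrac1\beta\log u + c$, so $Q(u) = \alpha u^{1/\beta}$ with $\alpha = e^{c}>0$; monotonicity of a quantile function forces $\beta>0$, and this is exactly the power law $F(x)=(x/\alpha)^\beta$. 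Equivalently one can insert $W'(p)=1/\beta$ into \eqref{2.15} and obtain the normalized representative $\alpha=1$.

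The one point needing care is the constant of integration: the Watt's measure is invariant under the rescaling $Q\mapsto\alpha Q$ (the $\log\alpha$ terms cancel in \eqref{2.14}), so it cannot pin down the scale, and the inversion formula \eqref{2.15} recovers $Q$ only up to that factor. Thus the statement is genuinely a characterization of the one-parameter family $\{\alpha u^{1/\beta}:\alpha>0\}$, and in the ODE step one must retain $c$ rather than silently drop it. No convergence issue arises, since $\log p$ is integrable at $0$ and $Q$ is assumed continuous and strictly increasing on $(0,1)$, so all the manipulations are legitimate.
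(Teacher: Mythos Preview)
Your argument is correct in both directions: the direct substitution for the ``if'' part and the differentiation $W_Q'(u)=u(\log Q(u))'$ leading to the separable ODE for the converse are both valid, and you are right to flag that Watt's measure is scale-invariant so that $\alpha$ is a free constant of integration rather than determined by $W_Q$.

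The paper itself does not supply a proof of this theorem; it is stated without justification and followed only by the remark that $W(t)$ is proportional to the head count ratio. Your proof therefore fills a genuine gap. The route you take is exactly what the surrounding machinery of the paper suggests---it is the same differentiation idea that produces the inversion formula \eqref{2.15}, specialized to the linear case $W_Q(u)=u/\beta$---so there is no methodological divergence to discuss, only the fact that you have written out what the paper leaves implicit.
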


Note that in this case $W( t)$ is proportional to the head count ratio.\\
\par Since poverty is to a large extent influenced by the inequalities of income in the population, it is natural that measures of inequality are linked to poverty indices. A major tool in the evaluation of income inequality in the Lorenz curve (LC)

\begin{equation}\label{2.16}
L(u)=\frac{1}{\mu} \int_0^u Q(p) d p
\end{equation}
where $\mu=E(X)$ is assumed to be finite. It follows that $L(u)$ determines $Q(u)$ up bo a scale transformation, since
$$
Q(u)=\mu \frac{d L(u)}{d u}.
$$
From \eqref{2.16} and \eqref{2.5}
$$
A_1(u)=u-\frac{L(u)}{L^{\prime}(u)}.
$$
Finally, the Gini index is
$$
\begin{aligned}
& G=1-2 \int_0^1 L(p) d p \\
& =1-2 \int_0^1\left(\exp -\int_u^1(p-A(p))^{-1} d p\right) d u \text {, } \\
&
\end{aligned}
$$
in terms of PGR.  \\

Generally, the LC is estimated by interpolation or directly from the distribution function of $X$ or through parametric forms. A large number of parametric forms of LC have been proposed in literature. Some important forms, their PGR's and their quantile functions are exhibited in Table \ref{table1} for easy reference.\\
\begin{table}
	\begin{center}
		\begin{tabular}{ccc}
		\hline
		Lorenz curve	& PGR &  $Q(u)$\\ 	\hline
			
			$u\exp [-\delta(1-u)]$& $\frac{u^2\delta}{1+\delta u}$ & $\mu (u\delta+1)e^{-\delta}(1-u)$ \\
			\cite{kakwani1973estimation}&& \\ \hline
		$	u(1-\delta(1-u)^{\frac{1}{2}}), 0<\delta<1$& $\frac{u^2}{(1-u)^{\frac{1}{2}}+u\delta-\delta}$  & $\mu [1-\delta (1-u)^{\frac{1}{2}}+u\delta(1-u)^{-\frac{1}{2}}]$ \\\cite{kakwani1980class}& &\\ \hline
		
			$\frac{e^{ku}-1}{e^k-1}$& $u-\frac{1}{k}(1-e^{-ku})$ & $\frac{\mu e^{ku}}{e^k-1}$  \\ \cite{chotikapanich1993comparison}& &\\ \hline
		
		$\frac{(1-\theta)^2u}{(1+\theta)^2-4u}$	& $\frac{4\theta u^2}{(1+\theta)^2}$ & $\frac{\mu(1-\theta^2)^2}{((1+\theta)^2-4\theta u)^2}$ \\
		\cite{aggarwal1984optimum}&&\\  \hline
		$uT^{u-1}, T>1$	& $\frac{u^2 \log T}{1+u \log T}$ & $\mu [1+\mu \log T]T^{u-1}$  \\
		\cite{gupta1984functional}&&\\	\hline
		$u[1-(1-u)^{\theta}], 0 <\theta \leq 1$	& $\frac{\theta u^2 (1-u)}{1+u \theta (1-u)^{\theta-1}-(1-u)^{\theta}}$ &  $\mu [1+(1-u)^{\theta-1}((\theta+1)u-1)]$\\
		\cite{ortega1991new}&&\\  \hline
		$\frac{u(\beta-1)}{\beta-u}, \beta>1$& $\frac{u^2}{\beta}$& $\frac{\mu \beta (\beta-1)}{(\beta-u)^2}$\\
		\cite{rohde2009alternative}&&\\
			\hline
		\end{tabular}
	\end{center}
\caption{Functions associated with parametric Lorenz curves}
\label{table1}
\end{table}
\par As a weakness of parametric LC's it is pointed out that they provide distribution functions that are difficult to manipulate algebraically to extract distributional properties. However a glance at the above table shows that the quantile functions have quite simple forms that admit analytic treatment.
\section{Rank-based poverty measures}
\par A second category of poverty metrics called rank-based measures that use a poor person's rank within the poor or whole population as an indicator of relative deprivation have the structural form
\begin{equation}\label{3.1}
P(F,t)=\int_0^t q(F,x;z)dF(x).
\end{equation}
Before discussing them, we need some basic concepts that are constituents of \eqref{3.1}. The first measure is the income gap ratio (IGR) for the poor defined as (\cite{sen1986gini})
\[
I-F(t)=1-\frac{\int_0^t xdF(x)}{tF(t)}
\]
or equivalently,
\begin{equation}\label{3.2}
I_Q(u)=1-\frac{\int_0^uQ(p)dp}{Q(u)}=\frac{1}{uQ(u)}\int_0^u pq(p)dp.
\end{equation}
It gives the average shortfall of incomes among the poor community. We also need the average income of the poor
\[
\mu_F(t)=\frac{1}{F(t)}\int_0^txdF(x)=E(x|X\leq t)
\]
and its quantile form
\begin{equation}\label{3.3}
\mu_Q(u)=\frac{1}{u}\int_0^u Q(p)dp=\frac{1}{u}\int_0^u pq(p)dp
\end{equation}
and the Gini index of the poor
\[
G_F(t)=1-\frac{2}{\mu_F(t)}\int_0^t x\left(1-\frac{F(x)}{F(t)}\right)\frac{f(x)}{F(t)}dx
\]
or
\[
G_Q(t)=1-\frac{2}{u^2\mu_Q(u)}\int_0^u \left(u-p\right)Q(p)dp
\]
\begin{equation}\label{3.4}
=\frac{2}{u^2\mu_Q(u)}\int_0^u pQ(p)dp-1.
\end{equation}
Some of the properties of the functios $I_Q$, $\mu_Q$ and $G_Q$ discussed in \cite{nair2008some} and \cite{nair2022cumulative} that are needed in the sequel are the following
\begin{enumerate}[(a)]
	\item the function $D(u)$ determines the income distribution as 
	\[Q(u)=\frac{\mu}{D(u)}\exp \left[-\int_u^1 D(p)dp\right]\]
	where
	\[
	D(u)=u(1-I_Q(u)).
	\]
	For the power distribution $Q(u)=\alpha u^{\frac{1}{\beta}}, I_Q(u)=(1+\beta)^{-1},$ a constant.
	\item $G_Q(u)$ is independent of the poverty quantile $u$ if and only if $X$ has power distribution, where $G_Q(u)=(1+2\beta)^{-1}.$
	\item The above measures satisfy the identity with Lorenz curve 
	\begin{itemize}
		\item [(i)] $L(u)=\mu^{-1}u \mu_Q(u)$
		\item [(ii)] $L(u)=\mu^{-1}D(u)Q(u), I_Q(u)=1-\frac{\mu L(u)}{uQ(u)}$
		\item [(iii)] $G_Q(u)=1-\frac{2}{uL(u)}\int_0^u L(p)dp$
		and $L(u)=\frac{2}{u(1-G-Q)}\exp \left[-\int_0^u\frac{2 dp}{p(1-G_Q(p))}\right].$
	\end{itemize}
It is often difficult to find the distribution function $F(t)$ from $G_F(t)$. An advantage of the quantile approach initiated here is that the distribution $Q(u)$ can be expressed in terms of $G_Q(u)$.
\end{enumerate}
\begin{thm}
	The income distribution specified by $Q(u)$ is given by
	\begin{equation}\label{3.8}
	Q(u)=B(u) \exp \left[-\int_u^1 B(p) d p\right]
	\end{equation}
	where $B(u)=\frac{1+G_Q(p)+p G_Q^{\prime}(p)}{p\left(G_Q(p)-1\right)}.$
\end{thm}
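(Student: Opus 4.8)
The plan is to invert the Gini-of-the-poor functional by the same first-order differential-equation device used for $A_1$ in Theorem~\ref{tm1} and for $D(u)$ in item (a): turn the defining identity into an equation for $Q$ (equivalently for $L$), differentiate once to obtain a logarithmic derivative, and integrate back. Concretely, I would start from the identity already recorded in item (c)(iii), namely $G_Q(u)=1-\frac{2}{uL(u)}\int_0^u L(p)\,dp$, and rewrite it as
\[
u\bigl(1-G_Q(u)\bigr)L(u)=2\int_0^u L(p)\,dp,
\]
which I regard as an equation for the unknown Lorenz curve $L$ driven by the prescribed function $G_Q$.

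Next I would differentiate both sides in $u$; by the fundamental theorem of calculus the right-hand side becomes $2L(u)$, while the product rule on the left gives $\bigl(1-G_Q(u)\bigr)L(u)-uG_Q'(u)L(u)+u\bigl(1-G_Q(u)\bigr)L'(u)$. Cancelling the common factor $L(u)$ and collecting terms leaves
\[
\frac{L'(u)}{L(u)}=\frac{1+G_Q(u)+uG_Q'(u)}{u\bigl(1-G_Q(u)\bigr)}=:B(u),
\]
so the logarithmic derivative of $L$ is exactly the function $B$ in the statement. Integrating from $u$ to $1$ and using the normalisation $L(1)=1$ yields $L(u)=\exp\!\bigl[-\int_u^1 B(p)\,dp\bigr]$, and then, since $Q(u)=\mu L'(u)=\mu B(u)L(u)$ by \eqref{2.16}, I obtain
\[
Q(u)=\mu\,B(u)\exp\!\Bigl[-\int_u^1 B(p)\,dp\Bigr],
\]
which is the asserted inversion formula \eqref{3.8}; the scale constant $\mu$ is unavoidable here because $G_Q(u)$ is a Gini functional and therefore scale invariant, so it can pin down $Q$ only up to a positive multiple.

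The differentiation and algebraic simplification are routine, so the real content lies on the admissibility side: one must know that a given $G_Q$ does arise from a genuine income law before \eqref{3.8} may be applied. The points to check are that $G_Q(u)\in[0,1)$, that the integral $\int_u^1 B(p)\,dp$ converges as $u\to1$ (where $1-G_Q$ may tend to $0$), and that the $L$ produced by the construction is nondecreasing and convex with $L(0)=0$, $L(1)=1$; these are the counterparts for $G_Q$ of the conditions assembled in Theorem~\ref{tm2.2}, and I expect verifying them to be the main obstacle. As a cross-check, the same computation can be run directly from \eqref{3.3}--\eqref{3.4}: clearing denominators there gives $uH(u)\bigl(1+G_Q(u)\bigr)=2\int_0^u pQ(p)\,dp$ with $H(u)=\int_0^u Q(p)\,dp$, and differentiating and simplifying reduces this to $Q(u)/H(u)=B(u)$; integrating $(\log H)'=B$ with $H(1)=\mu$ then recovers the same formula.
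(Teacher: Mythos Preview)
Your argument is correct and essentially coincides with the paper's proof: the paper works directly with $H(u)=\int_0^u Q(p)\,dp$ (your cross-check at the end) rather than with $L(u)=H(u)/\mu$, but since these differ only by the constant factor $\mu$ the two computations are the same. In fact your version is slightly cleaner, because you correctly track the scale constant $\mu$ (the paper's formula \eqref{3.9} tacitly sets $H(1)=1$), and you obtain the denominator $u(1-G_Q(u))$ rather than the paper's $p(G_Q(p)-1)$, which appears to be a sign slip in the statement.
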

\begin{proof}
From \eqref{3.4},
$$
\frac{1-G(u)}{2} \mu_Q(u)=\int_0^u\left(\frac{1}{u}-\frac{p}{u^2}\right) Q(p) d p.
$$
Using \eqref{3.2},
$$
\frac{1-G(u)}{2} \int_0^u Q(p) d p=\int_0^u Q(p) d p-\frac{1}{u} \int_0^u p Q(p) d p
$$
which simplifies to
$$
\frac{1+G(u)}{2}  u  \int_0^u Q(p) d p=\int_0^u p Q(p) d p
$$
Differentiating with respect to $u$, we get after some algebra,
$$
u[(G(u)+1)-2 u] Q(u)=\left[u G^{\prime}(u)+G(u)+1\right] \int_0^u Q(p) d p
$$
and
$$ \frac{Q(u)}{\int_0^u Q(p) d p}=B(u) $$
or
$$ \frac{d}{d u} \log \left(\int_0^u Q(p) d p\right)=B(u)$$
and
\begin{equation}\label{3.9}
\int_0^u Q(p) d p=\exp \left[-\int_u^1 B(p) d p\right]
\end{equation}
The proof is completed by noting that \eqref{3.8} is the derivative of the last expression \eqref{3.9}.	
\end{proof} 
\begin{cor}
All the basic functions associated with poverty can be expressed in terms of the PGR, $A_1(u)$.
\begin{enumerate}[(i)]
	\item the IGR, $ I(u)=\frac{A_1(u)}{u}$
	\item mean income of the poor, $\mu_Q(u)=\frac{1-A_1(u)}{u} Q(u)$
	\item Lorenz curve $L(u)=\mu^{-1} u\left(u-A_1(u)\right)$
	\item Gini index of the same
	$$
	\begin{array}{r}
	G_Q(u)=\frac{\int_0^u p D(p) \exp \left[-\int_p^1 D(r) d r\right] d p }{ \int_0^u D(p) \exp \left[-\int_p^1 D(r) d r\right] d p}.
	\end{array}
	$$
\end{enumerate}	
\end{cor}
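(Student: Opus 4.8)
The plan is to derive all four identities from the single relation hidden in \eqref{2.5}, namely $\int_0^u Q(p)\,dp = Q(u)\,(u-A_1(u))$, by substituting it into the definitions of the functions concerned; only the Gini index of the poor needs an extra ingredient, and for that I would bring in the inversion formula \eqref{2.6} (equivalently, the intermediate identity $\int_0^u Q(p)\,dp=\exp[-\int_u^1 (p-A_1(p))^{-1}dp]$ established in the proof of Theorem \ref{tm1}). So the whole statement is essentially a matter of ``unwinding'' the characterization results already proved.

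For (i), take the income gap ratio in the form given in \eqref{3.2}, $I_Q(u)=\frac{1}{uQ(u)}\int_0^u p\,q(p)\,dp$, integrate by parts to write $\int_0^u p\,q(p)\,dp=uQ(u)-\int_0^u Q(p)\,dp$, and substitute \eqref{2.5}; the expression collapses to $I(u)=A_1(u)/u$. For (ii), \eqref{3.3} gives $\mu_Q(u)=\frac1u\int_0^u Q(p)\,dp$, and \eqref{2.5} rewrites the integral in terms of $Q(u)$ and $A_1(u)$, which is the stated form. For (iii), substitute \eqref{2.5} into the Lorenz curve \eqref{2.16}; alternatively, use the relation $L(u)=\mu^{-1}u\,\mu_Q(u)$ recorded among the properties of Section 3 together with part (ii) just proved. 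Each of these is a one-line computation that introduces no new idea.

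The real work is in (iv). I would start from the quantile form \eqref{3.4}, clear $\mu_Q$ via $u^2\mu_Q(u)=u\int_0^u Q(p)\,dp$ so that $G_Q(u)=\frac{2\int_0^u pQ(p)\,dp}{u\int_0^u Q(p)\,dp}-1$, and then eliminate $Q$ in favour of $A_1$ by substituting \eqref{2.6} into the integrand of both integrals. Writing $D$ for the auxiliary function of $A_1$ attached to \eqref{2.6} (so that $Q(p)$ is, up to the scale constant $\mu$, of the form $D(p)\exp[-\int_p^1 D(r)\,dr]$, with $D(u)=u(1-I_Q(u))$ by part (i)), the numerator and denominator integrands then differ only by the weight $p$, and $\mu$ cancels in the ratio; collecting terms gives $G_Q(u)$ as the displayed ratio of integrals. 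The one place where care is needed is precisely this step: one must carry the same representation of $Q$ through $D$ and its associated exponential factor consistently in the numerator, the denominator and against the weight $p$, and then verify that the non-$A_1$ factors really cancel so that $G_Q(u)$ ends up a function of $A_1$ alone. Parts (i)--(iii), by contrast, are immediate substitutions of \eqref{2.5} and carry no such risk.
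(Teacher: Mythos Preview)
The paper gives no separate proof of this Corollary; it is meant to follow at once from \eqref{2.5} together with the representation of $Q$ through $D(u)=u(1-I_Q(u))$ recorded in property (a) of Section~3. Your plan---substituting $\int_0^u Q(p)\,dp=(u-A_1(u))Q(u)$ into the definitions \eqref{3.2}, \eqref{3.3}, \eqref{2.16} for (i)--(iii), and for (iv) inserting the $D$-representation of $Q$ (equivalently \eqref{2.6}, noting $D(u)=u-A_1(u)$ by part (i)) into \eqref{3.4}---is precisely that implicit argument.
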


The basic rank-based poverty measure proposed by \cite{sen1976poverty} is
$$
S_F(t)=H(t)\left[I_F(t)+\left(1-I_F(t)\right) G_F(t)\right]
$$
where $H(t)=F(t)$ is the head count ratio. We can rewrite it as
\begin{equation}\label{3.10}
S_Q(u)=S_F(Q(u))=u\left[I_Q(u)+\left(1-I_Q(u)\right) G_Q(u)\right].
\end{equation}
Since the basic formula \eqref{3.10} needs calculation of several constituents, it is useful to have a single expression for $S_Q(u)$ in terms of $Q(u)$. Substituting for $I_Q$,
$$
S_Q(u)=u\left[1-\frac{\int_0^u Q(p) d p}{u Q(u)}+\frac{\int_0^u Q(p) d p}{u Q(u)} G_Q(u)\right]
$$
giving
$$
\begin{aligned}
u-S_Q(u) & =\frac{\int_0^u Q(p) d p}{Q(u)}\left[1-G_Q(u)\right] \\
& =2 \frac{\int_0^u Q(p) d p}{Q(u)}\left[1-\frac{1}{u \mu_Q^{(u)}} \int_0^u p Q(p) d p\right] \\
& =2 \frac{\int_0^u Q(p) d p}{Q(u)}\left[1-\frac{\int_0^u p Q(p) d p}{u \int_0^u Q(p) d p}\right]
\end{aligned}
$$
Thus
$$
\left(u-S_Q(u)\right) u Q(u)=2 u \int_0^u Q(p) d p-\int_0^u p Q(p) d p
$$
leading to the formula
\begin{equation}\label{3.11}
S_Q(u)=u-\frac{2\int_0^uQ(p)dp}{Q(u)}+\frac{2\int_0^upQ(p)dp}{uQ(u)}.
\end{equation}
Conversely,
$$
\frac{d}{d u} u\left(u-S_Q\right) Q(u)=2 \int_0^u Q(p) d p
$$
giving a second order differential equation with variable coefficients
\begin{equation}\label{3.12}
u\left(u-S_Q\right) \frac{d^2 y}{d u^2}+\left(2 u-S_Q-u S_Q^{\prime}\right) \frac{d y}{d u}-2 y=0
\end{equation}
with $y=y(u)=\int_0^u Q(p) d p$. It seems that a general expression for $Q(u)$ in terms of $S_Q(u) $is  difficult to find. However it is useful in the case of some specific distributions. For example when $S(u)=K u$, where $K$ is a positive constant \eqref{3.12} gives
$Q(u)=\alpha u^{1 / \beta}$, the power distribution. There are two solutions of \eqref{3.12} of which one is the power distribution and the other is a term
that cannot be a quantile function so that $S(u)=Ku$ characterizes the power distribution. Some distributions and their expressions for $S(u)$ are given below
\begin{itemize}
	\item [(i)] power distribution, $Q(u)=\alpha u^{1 / \beta}: S(u)=\frac{3 \beta+1}{(\beta+1)(2 \beta+1)} u$
	\item [(ii)] exponential, $Q(u)=-\frac{1}{\lambda} \log (1-u)$ :
	$$
	S(u)=2-\frac{1}{u}-\frac{3 u-2}{2 \log (1-u)}
	$$
	\item [(iii)] Pareto I: $\quad Q(u)=\sigma(1-u)^{-\frac{1}{\alpha}} ; \quad F(x)=1-\left(\frac{\sigma}{x}\right)^\alpha, x>\sigma>0$
	$$
	S(u)=u+\frac{2 \alpha}{1-\alpha}(1-u)^{\frac{1}{\alpha}}+\frac{2 \alpha^2}{(1-\alpha)(1-2 \alpha)} u\left[(1-u)^2-(1-u)^{\frac{1}{\alpha}}\right]
	$$
	\item [(iv)] Govindarajuln: $Q(u)=\sigma\left[\left(\beta_1\right) u^{\beta} \beta_u-\beta u^{ \beta+1}\right]$.
	$$
	S(u)=u-\frac{2 u}{\beta+1}+2 u^2\left(\frac{\beta+2)(\beta+3)-\beta(\beta+2) u}{(\beta+2)(\beta+3)-(\beta+1-\beta u)}\right.
	$$
	\item [(v)] Dagum $Q(u) = b\left[u^{-\frac{1}{\beta}}-1\right]^{-\frac{1}{a}} ; \quad F(x)=\left[\left(1+\left(\frac{b}{x}\right)^a\right]^{-\beta}\right.$
	$$
	S(u)=u-\frac{2 b}{a Q(u)}\left[B_{\left(\frac{1}{b}\right)^a}\left(\beta+\frac{1}{a}, 1-\frac{1}{a}\right)-\frac{1}{u} B_{\left(\frac{1}{b}\right)^a}\left(2 \beta+\frac{1}{a}, 1-\frac{1}{a}\right)\right.
	$$
\end{itemize}
\begin{figure}[H]
	\centering
	\includegraphics[width=0.7\linewidth]{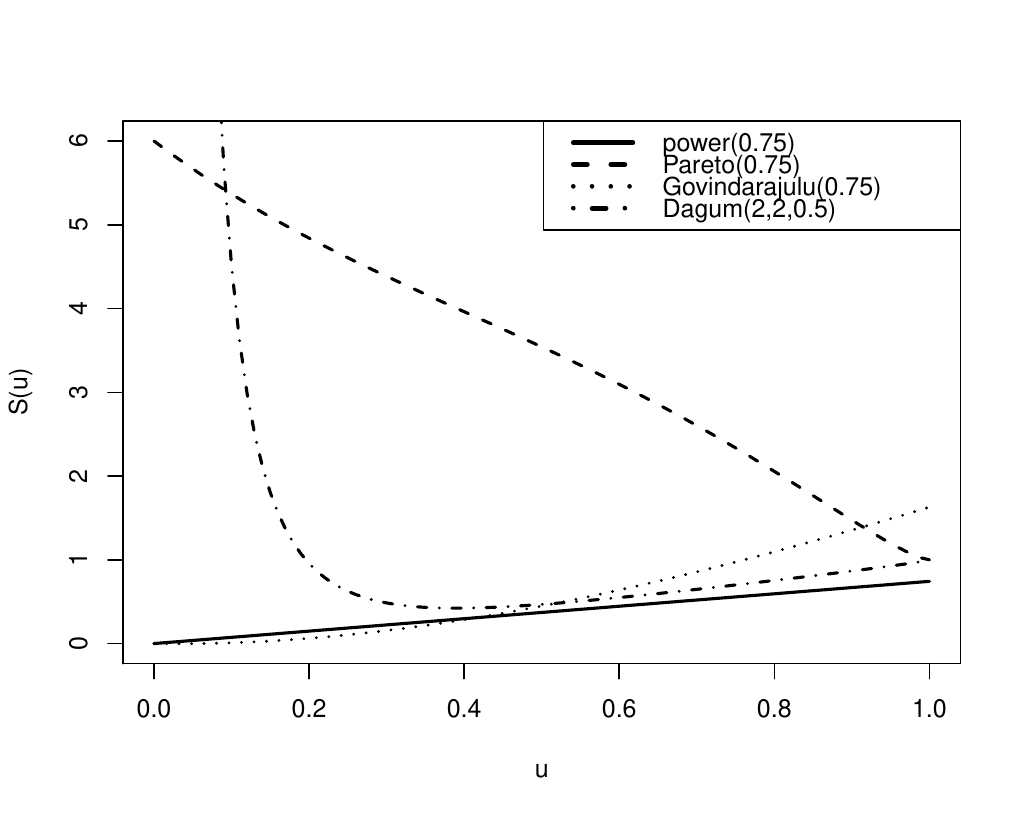}
	\caption{$S(u)$ of power, Pareto, Govindarajulu and Dagum distribution with different parameter values.}
	\label{fig:su}
\end{figure}
In Figure \ref{fig:su} the graphs of the index for the power distribution, Pareto I, Govindarajulu and Dagum distributions are shown. Based on the graph and other analysis, we see that
\begin{enumerate}[(i)]
	\item The index of the power distribution is proportional to the head count ratio and it is the only distribution possessing this property
	\item $S(u)$ can be increasing (power), decreasing (Pareto I) or initially decreasing and then increasing (Dagum) in $u$ (head count ratio). Since $u = F(t)$ is increasing (decreasing) means $t$ is also increasing (decreasing) and vice-versa the monotonicity of $S_Q(u)$ and $S_F(t)$ are the same.
	\item  When an income model is chosen arbitrarily from a list of candidates to study poverty using $S_Q (u)$, it should be compatible with the monotonic nature of the Sen index.
\end{enumerate} 

While Sen index expressed in the general form \eqref{3.1} has $q(F, x ; z) = $ $2\left[1-\frac{F(x)}{r(z)}\right]\left(1-\frac{x}{z}\right)$, where $r_z$ is the proportion below the poverty line, other prominent indices proposed in literature are
\begin{enumerate}[(a)]
	\item \cite{takayama1979poverty} with $q(F, x ; z)=2(1-F(x))\left(1-\frac{x}{\mu_F}\right)$ where $\mu_F=r_z \mu_p+\left(1-r_z\right) z$ and $\mu_p$ is the mean income of the poor.
	\item \cite{kakwani1980class}: $q(F, x ; z)=(k+1)\left(1-F(x) / r_z\right)^k\left(1-\frac{x}{z}\right)$
	and 
	\item \cite{thon1983poverty}: $q(F, x ;z)= \frac{2}{c-1}\left[ c-zF(x) \right] \left( 1-\frac{x}{z}\right), c \geq 2$
	\item \cite{foster1984class}: $F(s)=H\left(I^2(t)+(1-I)^2 C_p^2(t)\right]$ where $H$ is the head count ratio and $C_p$ is the coefficient of variation among the poor. These can also be treated similar to $S(u)$.
\end{enumerate}
\par \cite{shorrocks1995revisiting} provided a modification of the Sen index which satisfies the properties of symmetry, replication invariance, monotonic and homogeneous of degree zero in incomes and poverty line. See \cite{hagenaars2017definition}, \cite{xu2020sen} and \cite{chakravarty2019shorrocks} for details. The Shorrocks-Sen index, as it is called, takes the form
$$
S S_F(x)=\mu(x)[1+G(x)]
$$
where $\mu$ is the mean and $G$ is the Gini index of $X=\frac{t-Y}{t}$ and $Y$ is the income. In terms of $F(t), I_F(t)$ and $G_F(t)$ we have,
$$
S S_F(t)=(2-F(t)) F(t) I_F(t)+F^2(t)\left(1-I_F(t)\right) G_F(t).
$$

It is easy to see that the quantile version is

\begin{equation}\label{3.13}
S S_Q(u)=(2-u) u I_Q(u)+u^2\left(1-I_Q(u)\right) G_Q(u) .
\end{equation}
The form gives properties different from the Sen poverty index, for example,
In the power distribution, $ S S_Q(u)$ is not proportional to the head count ratio, but

$$
S S(u)=\frac{u(2+4 \beta-(\beta+1) u)}{(\beta+1)(2 \beta+1)}
$$
is a quadratic function. The derivative of $SS(u)$ is
$$
\frac{d S S}{d u}=\frac{(2+4 \beta)-2 u(\beta+1)}{(\beta+1)(2 \beta+1)}>0 \text { for all } u \text {. }
$$
In the interval $[0,1], S S(u)$ is always an increasing function of $u$.\\
\par A third approach to poverty measure by \cite{hagenaars1987class} visualizes it in the tradition of Dalton with the aid of utility functions with the general definition
$$
P_F( t)=A(t) \int_0^t[U(t) - U(x)] d F(x)
$$
where $U(.)$ is the individual's utility function and $A(t)$ is a normalization factor. This is equivalent to
\[
P_Q(u)=P_F(Q(u))=A_H(u)\int_0^u\left[g(u)-g(p)\right]dp
\]
where $A_H(u)=A(Q(u))$ and $g(p)=U(Q(p))$. This class has some common measures like those of \cite{watts1969economic}, \cite{chakravarty1983ethically} and \cite{clark1981indices} disused earlier as particular cases.
\section{Income modelling by quantile functions}

\par The statistical regularities observed in income date have prompted researchers even from the early days to promote parametric models to represent them. In many cases the data generating mechanism was satisfactorily approximated by distribution functions and the summary measures derived from them were quite useful in indicating income characteristics. However, the absence of any concrete rationale for the choice of a specific model resulted in prescribing a large number of distributions as income models for data separated by time and space.  A few important distributions specified by their quantile functions that have potential as income models are given below along with the expressions of poverty measures pertaining to them.
\begin{enumerate}[(i)]
	\item Kappa distribution introduced by \cite{hosking1994four} has quantile function

\begin{equation}\label{4.1}
	Q_k(u)=\alpha+\frac{\beta}{\gamma}\left[1-\left(\frac{1-u^\delta}{\delta}\right)^\gamma\right]
\end{equation}
where $\alpha$ is a location parameter and $\beta$ is a scale parameter. Since the random variable $X$ has to be non-negative, $Q(0) \geq 0$, giving $\alpha+\frac{\beta}{\gamma}\left(1-\delta^{-\gamma}\right) \geq 0$ along with $\beta>0$. The advantage of \eqref{4.1} is that it contains as special cases, the Pareto distribution for $\delta=1$, Stoppa for $\gamma<0, \delta>0$ and $\beta=-\gamma(\delta)^\gamma, \alpha=\frac{\beta}{\gamma} ;$ Dagum, when $\gamma<0, \delta<0$, $\beta=\gamma(\delta)^\gamma$ and $\alpha=\frac{\beta}{\gamma}$, Burr type III $(\gamma<0, \delta<0)$, Burr Type $XII$  $(\gamma>0, \delta<0)$, generalized Pareto $(\delta=1)$, exponential $\gamma=1, \delta=0$. It is also obtained as the distribution of $\frac{1}{Y}$, where $Y$ is the generalized Weibull of \cite{mudholkar1994generalized}. Besides the distribution \eqref{4.1} gives good approximation to other unimodal distributions $\gamma, \delta<1$.  \cite{tarsitano2006new} has used \eqref{4.1} to analyse the income data related to the net disposable in Italian household budget for the years 1993, 1995, 1998 and 2000. Various expressions for poverty measures are\\

\quad poverty gap ratio, $A_1(u)=u-\frac{\beta P_1(u ; \gamma, \delta)}{\gamma \delta Q(u)}-\frac{\alpha u}{Q(u)}$ \\

\quad Lorenz curve, $L(u)=\frac{1}{\mu}\left[\alpha u+\frac{\beta}{\gamma \delta} P_1(u, \gamma, \delta)\right]$,\\

\quad mean income, $\mu_{Q_k}(u)=\frac{1}{u}\left[\alpha u+\frac{\beta}{\gamma \delta} P_1(u, \gamma, \delta)\right]$,\\

\quad income gap rato, $I_{Q_k}(u)=1^{r \delta} \frac{1}{u Q(u)}\left[\alpha u+\frac{\beta}{\gamma \delta} P_1(u, r, \delta)\right]$, and the \\

\quad Gini coefficient of the poor
$$G_{Q_k}(u)=\frac{2}{u^2\mu_Q}\left[\frac{\alpha u^2}{2}+\frac{\beta}{\gamma \delta}P_2(u,\gamma, \delta)\right],
$$
where,
$$
\begin{aligned}
& P_1(u, \gamma, \delta)=\int(1-t)^\gamma(1-t \delta)^{\frac{1}{\delta}-1} d t \\
& P_2(u, \gamma, \delta)=\int(1-t)^\gamma(1-t \delta)^{\frac{1}{s}-1}
\end{aligned}
$$

with the integrals evaluated over $\left(\frac{1-u^\delta}{\delta}, \frac{1}{\delta}\right)$.
\item  Generalized lambda distributions:  The form proposed by \cite{ramberg1972approximate} has quantile function
\begin{equation}\label{4.2}
Q_R(u)=\lambda_1+\lambda_2^{-1}\left[u^{\lambda_3}-(1-u)^{\lambda_4}\right]
\end{equation}

where $\lambda_2>0$ and $\lambda_1-\frac{1}{\lambda_2} \geq 0$ for non-negativity. The general properties of the model and regions of validity together with conditions that approximate \eqref{4.2} to many continuous distribution functions used for income analysis are given in \cite{karian2000fitting}. \cite{tarsitano2004fitting} has employed this distribution as an income model. For this model, we have
$$
\begin{aligned}
& A_1(u)=u-\frac{1}{Q_R(u)} \lambda_1+\lambda_2^{-1}\left(\frac{u^{\lambda_3+1}}{\lambda_3+1}+P_3(u)\right),
\end{aligned}
$$
\[
L(u)=\frac{1}{\mu}\left(\lambda_1u+P_3(u)\right),\; \mu=\lambda_1+\lambda_2^{-1}\left(\frac{1}{\lambda_3+1}-\frac{1}{\lambda_4+1}\right),
\]
\[
\mu_{Q_R}(u)=\frac{1}{\mu} \left( \lambda_1u+P_3(u)\right),
\]
\[
I_{Q_R}(u)=1-\frac{1}{uQ_R(u)}\left(\lambda_1u+P_3(u)\right),
\]
where $P_3(u)=\frac{u^{\lambda_3+1}}{\lambda_3+1}+\frac{(1-u)^{\lambda_4+1}}{\lambda_4+1}$
and $$
G_{Q_R}(u)=u^2 \frac{2}{\mu_Q(u)}\left[\frac{\lambda_1 u^2}{2}+\frac{u^{\lambda_3+2}}{\lambda_2\left(\lambda_3+2\right)}+\frac{u(1-u)^{\lambda_4+1}}{\lambda_2\left(\lambda_4+1\right)}+\frac{(1-u)^{\lambda_4+1}-1}{\lambda_2\left(\lambda_4+2\right)}\right].
$$

An alternative version proposed by Freimer et
al. $(2008)$ is

$$
\begin{aligned}
 Q_F=\lambda_1+\lambda_2^{-1}\left(\frac{u^{\lambda_3}-1}{\lambda_3}-\frac{(1-u)^{\lambda_4}-1}{\lambda_4}\right)
\end{aligned}
$$

where $\lambda_1-\frac{1}{\lambda_2 \lambda_3} \geq 0$. \cite{haridas2008modelling} have discussed the role of this distribution for modelling income data. It contains the power, Pareto I \& II distributions and approximates the Weibull, Singh-Maddala, Dagum and Fisk distributions for different ranges of parameter values. We have 
$$
\begin{aligned}
& A_1(u)=u-\frac{1}{Q(u)}P_4(u)
\end{aligned}
$$
\[
L(u)=\frac{1}{\mu}\left(P_4(u)\right),
\]
\[
\mu_{Q_F}(u)=\frac{1}{\mu} \left( \frac{1}{\lambda_2\lambda_4+\lambda_1}+P_4(u)+\frac{(1-u)^{\lambda_{4+1}-1}}{\lambda_2\lambda_4(\lambda_{4}+1)}\right),
\]
\[
I_{Q_F}(u)=1-\frac{1}{uQ(u)}\left(\frac{1}{\lambda_2\lambda_4}+P_4(u)+\frac{(1-u)^{\lambda_{4+1}-1}}{\lambda_2\lambda_4(\lambda_{4}+1)}\right),
\]
where $P_4(u)=\frac{1}{\lambda_2\lambda_4}+\lambda_1+\frac{u}{\lambda_2\lambda_4}+\frac{u^{\lambda_3+1}}{\lambda_2\lambda_3(\lambda_{4}+1)}+\frac{(1-u)^{\lambda_4+1}-1}{\lambda_2\lambda_4(\lambda_3+1)}$
and $$
G_{Q_F}(u)=\frac{2}{u^2\mu_Q}\left[\left( \lambda_1 -\frac{1}{\lambda_2\lambda_3}+\frac{1}{\lambda_2\lambda_4}\right)\frac{u^2}{2}+\frac{u^{\lambda_3+2}}{\lambda_2\lambda_3\left(\lambda_3+2\right)}+\frac{u(1-u)^{\lambda_4+1}}{\lambda_4+1}+\right.$$ $$\left. \frac{(1-u)^{\lambda_4+1}-1}{(\lambda_4+1)\left(\lambda_4+2\right)}\right].
$$

\item  Wakeby distribution: Introduced by \cite{houghton1978birth}, the quantile function is given by

\begin{equation}\label{4.3}
Q_w(u)=\lambda+\theta(1-u)^{-\alpha}-\phi(1-u)^\beta
\end{equation}

which has five parameters of which $\lambda$ specifies location, $\theta$ and $\phi$ the scales and $\alpha$ and $\beta$ the shapes. For the random variable to be non-negative we should have $\lambda+\theta-\phi \geq 0$ and $\theta, \phi>0$.
An important property of the model is that it can satisfactory represent positively skewed data like the log-normal, gamma and beta laws. It contains the Pareto I and Pareto II distributions as special cases. The poverty measures are
$$
\begin{aligned}
& A_1(u)=u-\frac{1}{Q_w(u)} \quad P_5(u), \\
& L(u)=\frac{1}{\mu} P_5(u), \\
& \begin{array}{l}
\mu_{Q_W}(u)=\frac{1}{u} P_{5}(u), \\
I_{Q_W}=1-\frac{P_5(u)}{u Q_w(u)},
\end{array}
\end{aligned}
$$
where $\quad P_5(u)=\lambda u+\frac{\theta(1-u)^{-\alpha+1}-1}{\alpha-1}+\phi \frac{(1-u)^{\beta+1}-1}{\beta+1}, \alpha>1, \beta>-1$
and
$$ G_Q(u)=\frac{2}{u^2 \mu_{Q_w}(u)}\left[\frac{\lambda u^2}{2}+\frac{\theta}{\alpha-1}\left\{u(1-u)^{-\alpha+1}+\frac{(1-u)^{-\alpha+2}-1}{\alpha-2}\right\}\right. +$$ 
$$\left.  \frac{\phi}{\beta+1}\left\{u(1-u)^{\beta+1}+\frac{(1-u)^{\beta+2}-1}{\beta+2}\right\}\right].
$$
\item Govindarajulu distribution: This distribution is defined by
$$
Q(u)=\sigma\left((\beta+1) u^\beta-\beta u^{\beta+1}\right), \sigma,  \beta>0.
$$
Its properties and application are described in \cite{nair2013quantile}.
In this case
$$
\begin{aligned}
A_1(u) & =\frac{\beta(\beta+2-\beta u-u)}{(\beta+2)(\beta+1-\beta u)}, \\
L(u) & =\frac{1}{2}(\beta+2-\beta u) u^{\beta+1}, \\
\mu_Q(u) & =\sigma \frac{(\beta+2-\beta u)}{\beta+2} u^\beta, \\
I_Q(u) & =\frac{\beta}{\beta+1-\beta u},
\end{aligned}
$$
and
$$
G_Q(u)=\frac{2(\beta+2)}{\beta+2-\beta_2 u}\left(\frac{\beta+1}{\beta+2}-\frac{\beta u}{\beta+3}\right).
$$
Other flexible quantile functions that have a similar role to play in income analysis can be seen in \cite{nair2013quantile}.
\end{enumerate}

\begin{rem}
All the distributions except (i) given above do not have tractable distribution functions and hence the conventional methods cannot be applied to them. 	
\end{rem}
\begin{rem}
	In the above formulas setting $u=1$ in $\mu_Q(u)$ gives the mean of the distribution and similarly $u=1$ in $G_Q(u)$ leads to the usual Gini index.
\end{rem}

\section{Applications to data analysis}
In this section we demonstrate the usefulness of the results obtained above in the analysis of poverty and income 
inequality based on real data. The per capita personal incomes in dollars of 58 countries in the state of California in U.S for the year 2019. The data in ascending order of magnitude is given in Table \ref{table2}.\\
\begin{table}
	\begin{center}
		\begin{tabular}{cccccccc}
			
			38130 & 38445 & 39443 & 40447 & 41077 & 41267 & 41843 & 42043 \\
			
			42418 & 42845 & 43268 & 43471 & 43536 & 44259 & 45487 & 45742 \\
			
			45920 & 47139 & 47245 & 47245 & 47605 & 47860 & 48438 & 48841 \\
			
			49194 & 49654 & 51088 & 51131 & 51342 & 52976 & 53500 & 53505 \\
			
			54715 & 55261 & 55266 & 55910 & 56123 & 56534 & 59838 & 60513 \\
			
			61004 & 63542 & 63729 & 65094 & 66076 & 66700 & 68936 & 69898 \\
			
			71592 & 71711 & 72155 & 75717 & 81171 & 85324 & 11547 & 134107 \\
			
			139405 & 147135 &  &  &  &  &  &  \\			
		\end{tabular}
	\end{center}
\caption{Per capita income of 58 countries in California}
\label{table2}
\end{table}

\par Let $x_{1 : n}, x_{2: n}, \ldots x_{n:n}$ be the ordered incomes with $x_{0: n}=0$. Then $F(x_{i:n})=\frac{i}{n},\; i=1,2,...,n$ and the empirical quantile function $\bar{Q}\left(u_i\right)=x_{i: n}, \frac{i-1}{n} \leqslant u_i \leqslant \frac{i}{n}$ with $u_0=0$. At the values of $u$ in $(0,1), \bar{Q}(u)$ by linear interpolation at the plot position $\frac{i-\frac{1}{2}}{n}$, enable $\bar{Q}(u)$ to be differentiable and the derivative $\bar{q}(u)=\frac{d \bar{Q}}{d u}$,
becomes $\bar{q}_i(u)=n\left(x_{i+1: n}-x_{i: n}\right), \frac{i-\frac{1}{2}}{n}<u<\frac{i+\frac{1}{2}}{n}$. Then it is known that $\bar{q}(u)$ is a 
consistent estimator of $q(u)$. Accordingly, we propose consistent estimators for the functions $\mu_Q, I_Q$ and $A_1 (u)$ directly from the sample
\begin{enumerate}[(a)]
	\item $\bar{\mu}_Q\left(u_i\right)=\frac{1}{i} \sum_{i=1}^{j-1} i\left(x_{i+1, n}-x_{i: n}\right), j=\left[n u_i\right]$ the greatest integer contained in $n u_i$.
	for $\mu_Q(u)=\frac{1}{u} \int_0^u p q(p) d p$.
	\item $\bar{I}_Q\left(u_i\right)=\sum_{i=1}^{j-1}\frac{ i\left(x_{i+1: n}-x_{i: n}\right)}{ix_{j:n}}=x_{j:n} \bar{\mu}_Q\left(u_i\right)$
	for $ I_Q\left(u_i\right)  =\frac{1}{uQ(u)} \int_0^u p q(p) d p $ and
	\item $\bar{A}_1\left(u_i\right) = u_i \bar{I}_Q\left(u_i\right)$
	for $A_1(u)=\frac{1}{Q(u)} \int_0^u p q(p) d p$.
\end{enumerate} 
Using the data on incomes in Table \ref{table2} and the above estimators, the estimates of $\mu_Q(u)$ at the data points are given in Table \ref{table3}.
\begin{table}[H]
	\begin{center}
		\begin{tabular}{cccccc}
			$u$ & $\mu_Q(u)$ & $u$& $\mu_Q(u)$ & $u$& $\mu_Q(u)$ \\ \hline
			0.0345 & 19.7315 & 0.1207 & 1328.5588 & 0.2069 & 1911.2304 \\
			0.0577 & 304.2465 & 0.1379 & 1179.2204 & 0.2241 & 1876.7298 \\
			0.0690 & 755.8168 & 0.1552 & 1533.8510 & 0.2414 & 2206.1195 \\
			0.0862 & 1382.4415 & 0.1724 & 1484.0924 & 0.2586 & 2813.1330 \\
			0.1034 & 1495.4265 & 0.1897 & 1530.1792 & 0.2759 & 3604.7200 \\
			0.2931 & 4143.5181 & 0.5690 & 9230.4205 & 0.8276 & 19508.5318 \\
			0.3103 & 4232.9025 & 0.5862 & 9234.1131 & 0.8448 & 19630.5264 \\
			0.3276 & 4270.1685 & 0.6034 & 9168.6294 & 0.8621 & 20287.0419 \\
			0.3498 & 4130.3180 & 0.6207 & 9079.784 & 0.8793 & 22534.0065 \\
			0.3621 & 3985.7430 & 0.6379 & 8968.4554 & 0.8966 & 24025.0041 \\
			0.3793 & 4160.8242 & 0.6552 & 9622.0868 & 0.9138 & 26250.7014 \\
			0.3906 & 4087.9917 & 0.6724 & 12254.8224 & 0.9310 & 30938.4824 \\
			0.4138 & 4476.6540 & 0.6847 & 12834.8073 & 0.9483 & 58220.4540 \\
			0.4310 & 4732.0262 & 0.7069 & 14872.7752 & 0.9655 & 75757.0443 \\
			0.4483 & 4873.7952 & 0.7241 & 15707.5824 &  &  \\
			0.4655 & 5675.5410 & 0.7414 & 15734.6901 &  &  \\
			0.4828 & 5503.8662 & 0.7586 & 15759.2574 &  &  \\
			0.5170 & 6680.2736 & 0.7759 & 15855.2400 &  &  \\
			0.5345 & 7094.1000 & 0.7931 & 15774.5500 &  &  \\
			0.5517 & 7665.6070 & 0.8103 & 19349.9520 &  &  \\ \hline
		\end{tabular}
	\end{center}
	\caption{ Estimates of mean income of the poor}
	\label{table3}
\end{table}
\begin{figure}[H]
	\centering
	\includegraphics[width=0.8\linewidth]{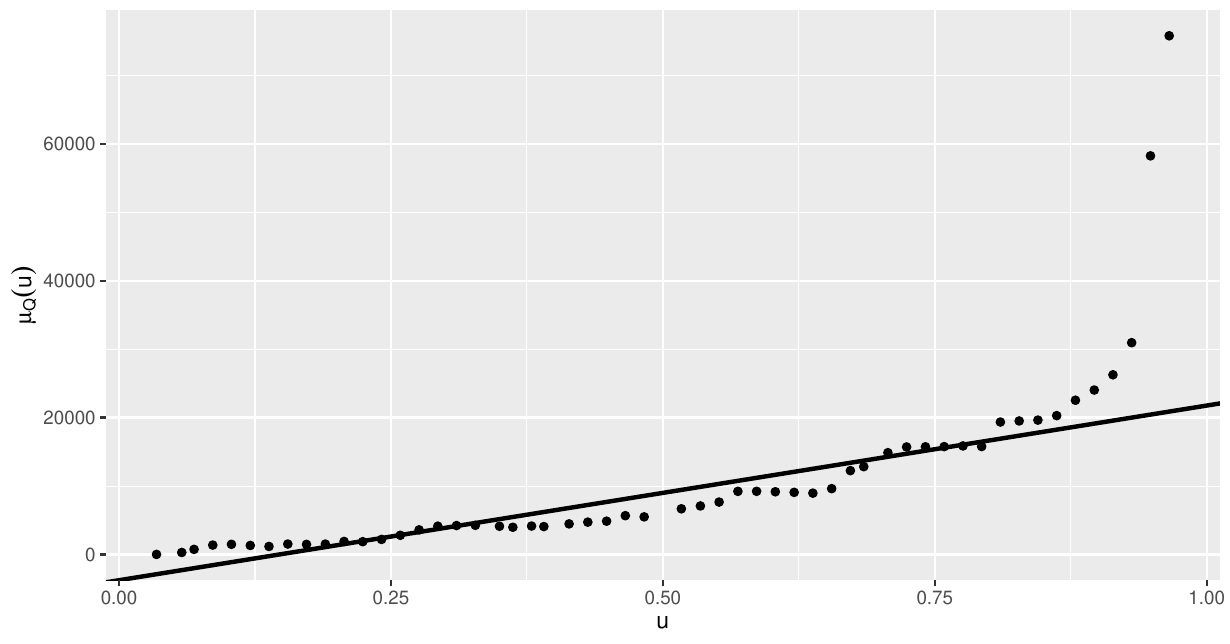}
	\caption{Mean income of the poor.}
	\label{fig:umuQ}
\end{figure}
\begin{table}[h]
	\centering
	\begin{tabular}{cccccc}
		$u$ & $G_Q(u)$ & $u$ & $G_Q(u)$ & $u$ & $G_Q(u)$ \\
		\midrule
		0.0345 & 0.0000000 & 0.1207 & 0.2336292 & 0.2069 & 0.8809066 \\
		0.0517 & 1.8720359 & 0.1379 & 1.9753794 & 0.2241 & 1.4186406 \\
		0.0690 & 1.8946424 & 0.1552 & 1.6473920 & 0.2414 & 1.6535262 \\
		0.0862 & 1.8770444 & 0.1724 & 2.0753379 & 0.2586 & 1.8274994 \\
		0.1034 & 1.6277299 & 0.1897 & 2.2754333 & 0.2759 & 3.1668855 \\
		0.2931 & 2.7828750 & 0.5690 & 2.1078872 & 0.8276 & 1.9962204 \\
		0.3103 & 1.8186444 & 0.5862 & 1.4990645 & 0.8448 & 1.2676027 \\
		0.3276 & 1.2639615 & 0.6034 & 1.3377579 & 0.8621 & 1.2150751 \\
		0.3498 & 1.2564311 & 0.6207 & 1.2898577 & 0.8793 & 1.5178803 \\
		0.3621 & 1.4415978 & 0.6379 & 1.3856341 & 0.8966 & 1.6428995 \\
		0.3793 & 1.7089461 & 0.6552 & 1.5809869 & 0.9310 & 1.6808132 \\
		0.3906 & 1.7493353 & 0.6724 & 1.6180994 & 0.9483 & 1.7363457 \\
		0.4138 & 1.5560488 & 0.6847 & 1.5423056 & 0.9655 & 1.8902764 \\
		0.4310 & 1.5361280 & 0.7069 & 1.9358923 &  &  \\
		0.4483 & 2.0041658 & 0.7241 & 1.9797122 &  &  \\
		0.4655 & 2.0575689 & 0.7414 & 2.0218067 &  &  \\
		0.4828 & 2.1521697 & 0.7586 & 2.2553937 &  &  \\
		0.5170 & 2.2471304 & 0.7759 & 2.5766037 &  &  \\
		0.5395 & 2.6172275 & 0.7931 & 2.8351479 &  &  \\
		0.5517 & 2.7180134 & 0.8103 & 2.7097428 &  &  \\ 
		\bottomrule
	\end{tabular}
	\caption{Estimates of Gini index for the poor}
	\label{table4}
\end{table}
\begin{figure}[H]
	\centering
	\includegraphics[width=0.8\linewidth]{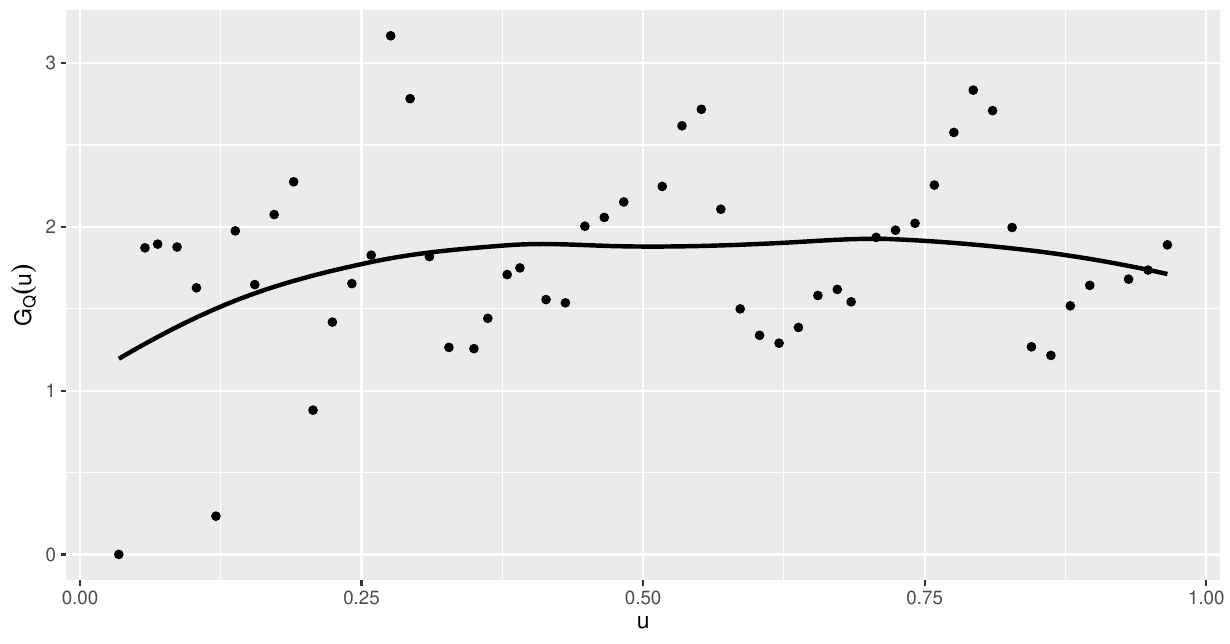}
	\caption{Gini index plot for the poor}
	\label{fig:uGQu}
\end{figure}
The graph of $\left(u, \mu_Q(u)\right)$ exhibited in Figure \ref{fig:umuQ} (after omitting the last four values being inconsistently large to represent the poverty line) gives evidence that $\mu_Q(u)$ is approximately a straight line. Hence based on this empirical fact, we take
$$
\mu_Q(u)=\alpha+\beta u, \quad 0<u<1.
$$
Hence from 
$$
\mu_Q(u)=\frac{1}{u} \int_0^u p q(p) d p
$$
or
$(\alpha+\beta u) u=\int_0^u p q(p) d p$.  Differentiating and simplifying,
$$
q(u)=\frac{\alpha}{u}+2 \beta
$$
or \begin{equation}\label{5.1}
Q(u)=\int_0^u q(p) d p=\alpha \log u+2 \beta u+r, \quad 0<u<1,\; \alpha, \beta, r>0
\end{equation} 
\begin{table}[h!]
	\centering
	\begin{tabular}{cccccc}
		$u$ & $S(u)$ & $u$ & $S(u)$ & $u$ & $S(u)$ \\ \hline
		0.0345 & 0.0005 & 0.0862 & 0.1420 & 0.1724 & 0.3087 \\
		0.0517 & 0.0190 & 0.1034 & 0.1775 & 0.1897 & 0.3520 \\
		0.0690 & 0.0552 & 0.1207 & 0.1896 & 0.2069 & 0.4829 \\
		0.1379 & 0.1946 & 0.2241 & 0.5064 & 0.2414 & 0.6305 \\
		0.1552 & 0.2704 & 0.2586 & 0.8610 & 0.2759 & 1.1775 \\
		0.2931 & 1.4064 & 0.3103 & 1.5215 & 0.3276 & 1.6146 \\
		0.3498 & 1.6397 & 0.3621 & 1.6600 & 0.3793 & 1.8039 \\
		0.3906 & 1.8414 & 0.4138 & 2.0930 & 0.4310 & 2.2872 \\
		0.4483 & 2.3850 & 0.4655 & 2.8860 & 0.4828 & 2.8944 \\
		0.5170 & 3.5308 & 0.5395 & 3.8454 & 0.5517 & 4.2420 \\
		0.5690 & 5.2297 & 0.5862 & 5.3472 & 0.6034 & 5.5143 \\
		0.6207 & 5.5216 & 0.6379 & 5.5930 & 0.6552 & 6.1272 \\
		0.6724 & 7.5776 & 0.6847 & 8.0598 & 0.7069 & 9.5082 \\
		0.7241 & 9.8880 & 0.7414 & 10.1229 & 0.7586 & 10.1682 \\
		0.7759 & 10.3200 & 0.7931 & 10.4060 & 0.8103 & 12.6900 \\
		0.8276 & 12.6132 & 0.8448 & 12.8874 & 0.8621 & 13.5792 \\
		0.8793 & 15.3027 & 0.8966 & 18.1300 & 0.9310 & 18.4926 \\ \hline
	\end{tabular}
	\caption{Estimates of Sen index}
	\label{table5}
\end{table}
\begin{figure}[h!]
	\centering
	\includegraphics[width=0.8\linewidth]{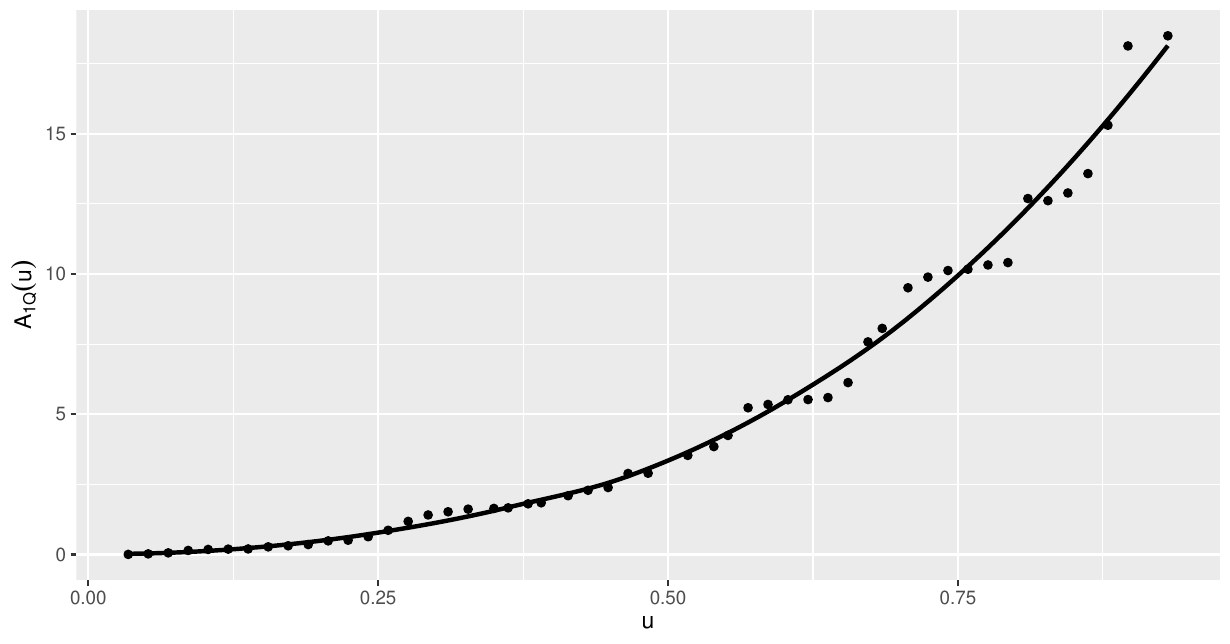}
	\caption{Sen index for the poor}
	\label{fig:uA1Qu}
\end{figure}
where we have assumed that $\log u$ ia evaluated at some point $u$ near the origin to give $r$. Notice that the quantile function \eqref{5.1} does not have a closed form distribution function to calculate the conventional indices necessitating the need for quantile-based poverty measures for further analysis. The other poverty measures are the income gap ratio,

$$
\begin{aligned}
I_Q(u) & =\frac{1}{u Q(u)} \int_0^u p q(p) d p=\frac{\mu_Q(u)}{Q(u)} \\
& =\frac{\alpha+\beta u}{\alpha \log u+2 \beta u+r}
\end{aligned}
$$
and the poverty gap ratio,

$$
\begin{aligned}
A_1(u) & =\frac{1}{Q(u)} \int_0^u p q(p) d p=u I_Q(u) \\
& =\frac{u(\alpha+\beta u)}{\alpha \log u+2 \beta u+r}
\end{aligned}
$$
and the Gini index for the poor, obtained from \eqref{3.11} is
$$
\begin{aligned}
& G_Q(u)=u-\frac{2}{Q(u)} \int_0^u(\alpha \log p+2 \beta p+r) d p+\frac{2}{Q(u)} \int_0^u\left(\alpha p \log p+2 \beta p^2+r p\right) d p \\
& =u-2 \frac{\alpha u(\log u-1)+\beta u^2+r u}{\alpha \log u+2 \beta u+r}+\frac{2\left(\alpha \frac{u^2}{2}-\frac{u^2}{4}+\frac{2 \beta u^3}{3}+\frac{r u^2}{2}\right)}{\alpha \log u+2 \beta u+r}.
\end{aligned}
$$


\begin{table}[H]
	\begin{center}
		\begin{tabular}{cccccc}
			$u$ & $I_Q(u)$ & $u$& $I_Q(u)$ & $u$& $I_Q(u)$ \\ \hline
			0.0345 & 0.0005 & 0.1207 & 0.0316 & 0.2069 & 0.0439 \\
			0.0517 & 0.0095 & 0.1379 & 0.0278 & 0.2241 & 0.0422 \\
			0.0690 & 0.0184 & 0.1552 & 0.0338 & 0.2414 & 0485 \\
			0.0862 & 0.0355 & 0.1724 & 0.0343 & 0.2586 & 0.0615 \\
			0.1034 & 0.0355 & 0.1897 & 0.0352 & 0.2759 & 0785 \\
			0.2931 & 0.0879 & 0.5690 & 0.1687 & 0.8276 & 0.2742 \\
			0.3103 & 0.0895 & 0.5862 & 0.1671& 0.8448 & 0.2742 \\
			0.3276 & 0.0897 & 0.6034 & 0.1671 & 0.8621 & 0.2829 \\
			0.3498 & 0.0863 & 0.6207 & 0.1624 & 0.8793 & 0.3123 \\
			0.3621 & 0.0830 & 0.6379 & 0.1598 & 0.8966 & 0.3626 \\
			0.3793 & 0.0859 & 0.6552 & 0.1702 & 0.9310 & 0.3626 \\
			0.3906 & 0.0837 & 0.6724 & 0.2048 & &  \\
			0.4138 & 0.0910 & 0.6847 & 0.2121 &  &  \\
			0.4310 & 0.0953 & 0.7069 & 0.2438 &  &  \\
			0.4483 & 0.0954 & 0.7241 & 0.2472 &  &  \\
			0.4655 & 0.1110 & 0.7414 & 0.2469 &  &  \\
			0.4828 & 0.1072 & 0.7586 & 0.2421 &  &  \\
			0.5170 & 0.1261 & 0.7759 & 0.2400 &  &  \\
			0.5395 & 0.1326 & 0.7931 & 0.2365 &  &  \\
			0.5517 & 0.1414 & 0.8103 & 0.2820 &  &  \\ \hline
		\end{tabular}
	\end{center}
	\caption{Estimates of income gap ratio}
	\label{table6}
\end{table}
Similarly $G_Q(u)$ is estimated by plugging in $\bar{q}$ and $\bar{\mu}_Q$, to get

$$
G_Q\left(u_j\right)=\frac{2}{j^2 \bar{\mu}_Q}(j) \sum_{i=1}^{j-1} i^2\left(X_{i+1: n}-X_{i: n}\right).
$$


We have presented in Tables \ref{table4}, \ref{table5}, \ref{table6}, the estimated values of IGR, Gini index and Sen index respectively based on the sample values.
\section*{Data Availability Statements}
Authors can confirm that all relevant data are included in the article and/or its supplementary information files.

\section*{Conflict of interest statement}
On behalf of all authors, the corresponding author states that there is no conflict of interest.
\bibliographystyle{apalike}
\bibliography{myref} 

\end{document}